\def \RR {{\mathbb R}}
\def \NN {{\mathbb N}}
\def \ZZ {{\mathbb Z}}
\def \QQ {{\mathbb Q}}
\def \SS {{\mathbb S}}
\def \cont{{\mathcal C}}
\newtheorem{theorem}{Theorem}[section]
\newtheorem{lemma}[theorem]{Lemma}
\newtheorem{example}[theorem]{Example}
\newtheorem{proposition}[theorem]{Proposition}
\newtheorem{remark}[theorem]{Remark}
\def \ds{\displaystyle}
\title{\sc On the One-Dimentional Pompeiu Problem}
\author{%
Vivina Barutello\footnote{Dipartimento di Matematica, Universit\`a degli Studi
di Torino, Via Carlo Alberto, 10,  10123 Torino,
Italy. e-mail: \texttt{vivina.barutello@unito.it, camillo.costantini@unito.it}}
\and
Camillo Costantini\footnotemark[1]
}
\date{}
\begin{document}
\maketitle
\begin{abstract}
We investigate the Pompeiu property for subsets of the real line, under no assumption of connectedness.
In particular we focus our study on finite unions of bounded (disjoint) intervals, and we emphasize 
the different results corresponding to the cases where the function in question is supposed to have constant integral on all isometric images, or just on all the translation-images of the domain. While no set of the previous kind enjoys the Pompeiu property in the latter sense, we provide a necessary and sufficient condition in order a union of two intervals to have the Pompeiu property in the former sense, and we produce some examples to give an insight of the complexity of the problem for three-interval sets.
\end{abstract}

\section{Introduction}
The Pompeiu problem traces back to 1929, and has been one of the most
extensively investigated issues both in applied and abstract mathematics.
Even if the original formulation given by Pompeiu in his basic papers \cite{Pompeiu1,Pompeiu3,Pompeiu2}
was including some supplementary assumptions, 
nowadays the vague appellation of \emph{Pompeiu problem} 
may label any question which sounds like this:
\begin{center}
\emph{
\parbox{5in}{
Let  $D \subseteq \RR^n$ be a measurable set and $f$ 
a continuous real-valued function on $\RR^n$ whose integral 
on every set ``congruent'' to $D$ takes a constant value $c$.
Must then the function $f$ be itself constant?
}}
\end{center}
If a domain $D\subseteq\RR^n$ is such that the above question is answered in the positive for every continuous function $f\colon\RR^n\rightarrow\RR$ satisfying the assumption, then
$D$ is said to have the \emph{Pompeiu property} (of course, this depends also on the definition of ``congruent'' we are considering). In the literature, at our best knowledge, all papers devoted to the Pompeiu problem are concerned with the case where $D$ is convex, or at least connected. For example, it is well-known that, when considering 
\emph{rigid motions} (translations composed with rotations), any ball in $\RR^n$ fails to enjoy the Pompeiu property, while it holds for some classes of domains whose boundary is $\SS^{n-1}$ (as ellipses and regular polygons in the plane, see \cite{Brown1982,Brown1968,Brown1973} and the survey paper \cite{Zalcman1992}).\par
As far as only connected domains are investigated, the Pompeiu problem is of no interest in $\RR$: clearly, for every bounded interval $[a,b]$, the function $f(x)=\sin\big(\frac{2\pi}{b-a}x\big)$ is non-constant and such that its integral is $0$ on every subset of $\RR$ congruent to $[a,b]$ (in fact, this is just a special case of the 
above-mentioned result, that no ball in $\RR^n$ may enjoy the Pompeiu property). However, once the connectedness assumption is dropped, the one-dimensional case becomes non-trivial, and this corresponds exactly to the kind of  investigation carried out in the present paper.\par
More specifically, our study has been focused on the following situation. Let $I$ be a finite union of disjoint bounded intervals of the real line, let $\Sigma$ be the set of all isometries $\sigma\colon I\rightarrow \RR$, and $\Sigma'$ a distinguished subset of $\Sigma$ (with, possibly, $\Sigma'=\Sigma$): when is it true that for every continuous function $f\colon\RR\rightarrow\RR$, the implication
\begin{equation} \label{eq:PC}
\left( \exists C \in \RR   \quad \forall \sigma \in \Sigma'   \quad \int_{\sigma(I)}f = C  \right)
\quad \implies \quad
f \text{ \emph{constant}}
\end{equation}
holds?\par
It is to be observed that, by a general result concerning the geometric structure of the spaces $\RR^n$, if $A$ is any subset of $\RR^n$ and $j\colon A\rightarrow \RR^n$ is an isometry, then $j$ extends to an isometry $j'\colon\RR^n\rightarrow\RR^n$ (equivalently, every isometry from a subset of $\RR^n$ to $\RR^n$ is the restriction of some isometry from the whole of $\RR^n$ (on)to $\RR^n$). Therefore, in the previous formulation we may equally well let $\Sigma$ to be the set of all isometries of the real line onto itself (i.e., as is well-known, of all translations possibly composed with a reflection). In our study, we have tackled question~\eqref{eq:PC} for two subsets 
$\Sigma'$ of $\Sigma$: namely, $\Sigma'$ $=$ set of all translations of the real line, and $\Sigma'=\Sigma$ (the set itself of the isometries). The results obtained in the two cases have turned out to be quite different. \par
In Proposition \ref{propo:false} \emph{infra} we prove that, whenever $D$ is the union of two bounded disjoint intervals,
it fails to enjoy the Pompeiu property with respect to $\Sigma'$ $=$ set of all translations of the real line. In fact, the argument used for the proof outlines an inductive procedure to obtain a non-constant real function whose integral on every translation-image of $D$ is constant.
As we point out in Remark \ref{rem:unioni_finite1}, the result extends to any finite union of disjoint bounded intervals, by an analogous but technically heavier (and tedious) proof. \par
On the other hand, when taking $\Sigma' = \Sigma$, even for the union of two disjoint intervals the situation appears to be multi-faceted, and the answer to the basic question depends on the relationships between the three fundamental quantities involved: the length of the two intervals and their gap. Theorem \ref{teo:nec_suf} \emph{infra} gives a necessary and sufficient condition for $D$ to enjoy the Pompeiu property in this case; in particular, the statement emphasizes the crucial r\^ole played by the rational or irrational character of some ratios related to the three quantities above.\par
Contrary to the results obtained for translations, when $\Sigma'=\Sigma$ passing from two to three intervals considerably boosts the complexity of the problem. In this paper we do not investigate in detail the three-interval (nor the more-interval) case. However, we prove that from every two-interval set (even enjoying the Pompeiu property) we may always obtain, by adding a third interval, a set for which the property does not hold; and we also give an example of a situation where the opposite phenomenon happens. The former result appears, in particular, to be somehow anti-intuitive, as it could seem reasonable that increasing the complexity of the set, the probability of getting the Pompeiu property increases as well. This should show how interesting and probably twisted would be a systematic study of the Pompeiu problem for multi-interval sets, when taking $\Sigma'=\Sigma$.

\section{The two-interval case}
\label{sec:1}
In this section we study conjecture \eqref{eq:PC} stated in the introduction, 
when we deal with continuous functions and
$I$ is the disjoint union of two non-trivial compact intervals. \\
In a first result we take into account a non-trivial subset of $\Sigma$, indeed we consider 
only translations on the real line: in this case \eqref{eq:PC} is false, and we will prove
the existence on infinitely many non-constant functions satisfying the integral condition. 
On the other hand, when also reflections are allowed, that is $\Sigma' \equiv \Sigma$, we 
will find a necessary and sufficient condition on $I$ in order to obtain a positive answer. 

We start proving that the integral condition, when $\sigma$ varies in the set of translations, 
is equivalent to a pointwise one.

\begin{lemma} \label{lem:cond_puntuale}
Let $a<b<c<d$ and $f \in \cont(\RR)$.
Then the following two conditions are equivalent:
\begin{itemize}
\item[(I)] $\displaystyle F(t) := \int_{a+t}^{b+t}f(x)dx + \int_{c+t}^{d+t}f(x)dx$
           is constant as $t$ varies in $\RR$;
\item[(P)] $f(a+t) + f(c+t) = f(b+t) + f(d+t)$, for every $t \in \RR$, i.e.
\begin{equation}\label{eq:cond_punt}
\forall x \in \RR, f(x) = f(x+a-d) + f(x+c-d) - f(x+b-d).
\end{equation}
\end{itemize}
\end{lemma}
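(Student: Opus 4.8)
The plan is to differentiate $F$ and recognize condition (P) as the statement that $F'$ vanishes identically. Since $f$ is continuous on $\RR$, each of the two functions $t\mapsto\int_{a+t}^{b+t}f$ and $t\mapsto\int_{c+t}^{d+t}f$ is of class $\cont^1$ by the fundamental theorem of calculus: writing $\Phi$ for a primitive of $f$, we have $\int_{a+t}^{b+t}f=\Phi(b+t)-\Phi(a+t)$ and similarly for the other integral, so the chain rule gives
\[
F'(t)=f(b+t)-f(a+t)+f(d+t)-f(c+t),\qquad t\in\RR.
\]

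For (I) $\Rightarrow$ (P): if $F$ is constant on $\RR$ then $F'\equiv 0$, and the displayed formula says precisely that $f(a+t)+f(c+t)=f(b+t)+f(d+t)$ for every $t$. Conversely, for (P) $\Rightarrow$ (I): the same formula shows that the pointwise identity in (P) is equivalent to $F'\equiv 0$ on $\RR$; since $\RR$ is an interval (hence connected), a function with identically zero derivative on it is constant by the mean value theorem, so $F$ is constant. This establishes the equivalence of (I) with the first form of (P).

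It then remains only to check that the two formulations of (P) coincide, which is a change of variable. Replacing $t$ by $x-d$ in $f(a+t)+f(c+t)=f(b+t)+f(d+t)$ and isolating the term $f(d+(x-d))=f(x)$ yields $f(x)=f(x+a-d)+f(x+c-d)-f(x+b-d)$, and conversely; since $x\mapsto x-d$ is a bijection of $\RR$, the ``for every $t$'' and ``for every $x$'' versions assert the same thing.

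There is essentially no serious obstacle in this argument; the only points deserving a word of care are the appeal to the fundamental theorem of calculus — legitimate because $f$ is continuous, so that $F\in\cont^1(\RR)$ and ``$F$ constant'' is genuinely equivalent to ``$F'\equiv 0$'' — and the fact that a vanishing derivative forces constancy precisely because the domain $\RR$ is connected.
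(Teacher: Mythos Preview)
Your proof is correct. For the direction (I) $\Rightarrow$ (P) you do exactly what the paper does: differentiate $F$. For (P) $\Rightarrow$ (I), however, the paper does not re-use the derivative; it fixes $t'<t''$ and verifies $F(t')=F(t'')$ by a direct calculation --- inserting the pointwise identity into the integral over $[d+t',d+t'']$ and cancelling via three changes of variable. Your route is shorter and more symmetric: you recognise (P) as literally the statement $F'\equiv 0$ and invoke the mean value theorem, which is entirely legitimate since continuity of $f$ makes $F\in\cont^1$. The paper's longer computation buys one thing your argument does not immediately give: it carries over verbatim to $f\in L^1_{loc}(\RR)$ with the pointwise identity holding almost everywhere (the remark following the lemma), where one would otherwise have to pass through absolute continuity of $F$ to conclude from $F'=0$ a.e.
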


\begin{proof}
\noindent (I) $ \Rightarrow$ (P) Trivially follows deriving the constant function $F$.

\noindent (P) $ \Rightarrow$ (I) Let $t',t'' \in \RR$ with $t'<t''$, and set, for the sake of semplicity
$s := t''-t'$, $a':= a+t'$, $b':= b+t'$, $c':= c+t'$, $d':= d+t'$
and
\[
r' := \int_{a+t'}^{b+t'}f(x)dx + \int_{c+t'}^{d+t'}f(x)dx 
   = \int_{a'}^{b'}f(x)dx + \int_{c'}^{d'}f(x)dx.
\]
Our aim is to prove that
\[
r'' := \int_{a+t''}^{b+t''}f(x)dx + \int_{c+t''}^{d+t''}f(x)dx =r'. 
\]
Using assumption \eqref{eq:cond_punt} and the definition of $r'$, we see that
\begin{equation}\label{eq:..}
\begin{split}
r'' & = \int_{a'+s}^{b'+s}f(x)dx + \int_{c'+s}^{d'+s}f(x)dx \\
    & = r' - \int_{a'}^{a'+s}f(x)dx + \int_{b'}^{b'+s}f(x)dx 
           - \int_{c'}^{c'+s}f(x)dx \\
    & \quad \quad+ \left( \int_{d'}^{d'+s} [f(x+a-d) + f(x+c-d) - f(x+b-d)] dx \right). 
\end{split}
\end{equation}
Now, straightforward changes of variables show that
\[
\begin{split}
\int_{d'}^{d'+s} f(x+a-d) dx &= \int_{d'+a-d}^{d'+s+a-d} f(z) dz = \int_{a'}^{a'+s} f(z) dz, \\ 
\int_{d'}^{d'+s} f(x+c-d) dx &= \int_{c'}^{c'+s} f(z) dz,\\ 
\int_{d'}^{d'+s} f(x+b-d) dx &= \int_{b'}^{b'+s} f(z) dz,
\end{split}
\]
and we conclude by replacing in \eqref{eq:..}.
\end{proof}

\begin{remark}
The previous lemma still holds when $f \in L^1_{loc}(\RR)$ if
we read the pointwise equality on $\RR$ exept a zero-measure set.
\end{remark}

\begin{proposition}\label{propo:false}
Let $I=[a,b]\cup[c,d]$, for some $a<b<c<d$, 
and $\Sigma'$ $=$ set of translations of the real line, then conjecture 
\eqref{eq:PC} is false in the realm of continuous functions.
\end{proposition}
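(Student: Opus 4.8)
The plan is to use the pointwise reformulation provided by Lemma~\ref{lem:cond_puntuale}: since $\Sigma'$ consists only of translations, finding a non-constant continuous $f$ with constant integral on every translate of $I$ is equivalent to solving the functional equation
\[
f(x) = f(x+a-d) + f(x+c-d) - f(x+b-d) \qquad \forall x \in \RR.
\]
Writing $\alpha := d-a > 0$, $\beta := d-b > 0$, $\gamma := d-c > 0$ (so $\alpha > \gamma > \beta > 0$, because $a<b<c<d$), this reads $f(x) = f(x-\alpha) + f(x-\gamma) - f(x-\beta)$. The idea is to build such an $f$ by a step-by-step (inductive) construction along a grid: specify $f$ arbitrarily — but non-constantly — on a starting interval of length $\alpha$, say $[0,\alpha]$, and then use the relation to \emph{propagate} the definition of $f$ to $[\alpha, 2\alpha]$, then $[2\alpha, 3\alpha]$, etc., and symmetrically to the left. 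Since the three shifts $\alpha, \beta, \gamma$ all lie strictly between $0$ and $\alpha$, the value $f(x)$ for $x$ in the next block is determined by values of $f$ at three points lying strictly to the left of $x$, already defined at the previous stage; so the recursion is well-posed and determines $f$ on all of $\RR$.

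The key steps, in order, are: (1) translate the problem via Lemma~\ref{lem:cond_puntuale} into the functional equation above; (2) choose the seed function on $[0,\alpha]$ — a smooth bump vanishing to infinite order (or at least together with enough derivatives) at both endpoints $0$ and $\alpha$, and not identically zero, so it is non-constant; (3) define $f$ on $[\alpha,2\alpha]$ by $f(x) := f(x-\alpha) + f(x-\gamma) - f(x-\beta)$ and check that this agrees with the seed value at $x=\alpha$ (both sides vanish there by the vanishing-endpoint choice) and is continuous; iterate to the right over $[\,n\alpha,(n+1)\alpha\,]$ for all $n \in \NN$, and symmetrically rewrite the relation as $f(x-\alpha) = f(x) - f(x-\gamma) + f(x-\beta)$ to extend to the left; (4) verify that the function so obtained is continuous on all of $\RR$ (gluing at the grid points $n\alpha$ works precisely because at each gluing point the bump and all the translated pieces vanish), that it satisfies the functional equation everywhere by construction, and that it is non-constant since it coincides with the non-constant seed on $[0,\alpha]$; (5) conclude, via Lemma~\ref{lem:cond_puntuale}, that $F$ is constant, so \eqref{eq:PC} fails. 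Finally, note that rescaling the seed by any real constant gives infinitely many (indeed a whole vector space of) such functions, matching the claim in the paper.

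The main obstacle is the continuity (and, if one wants it, smoothness) of the glued function at the grid points $n\alpha$: one must make sure that the piece defined on $[n\alpha,(n+1)\alpha]$ matches, at $x = n\alpha$, the value already assigned there from the block $[(n-1)\alpha, n\alpha]$. Requiring the seed to vanish at both endpoints of $[0,\alpha]$ forces every subsequently constructed value at a grid point to be a combination of previously assigned grid-point values plus a term that collapses to $0$; arranging the bookkeeping so that all these contributions are consistent is the delicate part, and choosing the seed to vanish to high order at the endpoints makes the argument clean. A secondary (minor) point is checking that the leftward extension, obtained from the rearranged relation, is genuinely consistent with the rightward one on the overlap — but since both are just the same relation solved for a different term, this is automatic.
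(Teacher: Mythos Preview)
Your overall strategy matches the paper's: reduce via Lemma~\ref{lem:cond_puntuale} to the pointwise relation, pick a seed on an interval of length $\alpha=d-a$, and extend recursively in both directions. However, two concrete steps in your execution do not work as written.

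First, the block size $\alpha$ is too large for the recursion to be well-posed. For $x$ in the new block $(\alpha,2\alpha]$ you need $f(x-\gamma)$ and $f(x-\beta)$, but since $\beta,\gamma<\alpha$ these arguments can themselves land in $(\alpha,2\alpha)$ --- the very block you are defining --- so the assignment is circular, not ``already defined at the previous stage''. (Incidentally, $a<b<c<d$ gives $\alpha>\beta>\gamma$, not $\alpha>\gamma>\beta$.) The correct rightward step is the \emph{smallest} shift $\gamma=d-c$: if $f$ is known on $[0,M]$ with $M\ge\alpha$, then for every $x\in(M,M+\gamma]$ all three of $x-\alpha$, $x-\beta$, $x-\gamma$ lie in $[0,M]$. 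This is exactly the step $d-c$ the paper uses on the right (and, symmetrically, $b-a$ on the left).

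Second, your gluing claim at $x=\alpha$ does not follow from endpoint-vanishing of the seed. At $x=\alpha$ the recursion value is $f(0)+f(\alpha-\gamma)-f(\alpha-\beta)=f(c-a)-f(b-a)$, and there is no reason a bump vanishing only at $0$ and $\alpha$ should take equal values at the interior points $b-a$ and $c-a$. What is actually needed is the single linear compatibility condition $f_0(d)=f_0(a)+f_0(c)-f_0(b)$ on the seed (in your coordinates, $f(\alpha)=f(0)+f(\alpha-\gamma)-f(\alpha-\beta)$); this is precisely what the paper imposes. Once that one condition holds, continuity at all later gluing points is automatic, since the extension formula is continuous in already-defined data. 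With these two fixes --- step $d-c$ to the right, step $b-a$ to the left, and the single compatibility constraint on the seed --- your argument becomes the paper's proof.
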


\begin{proof}
Let $f_0 \in \cont([a,d])$ be such that
\[
f_0(d) = f_0(a)+f_0(c)-f_0(b) \quad \text{and} \quad \int_I f_0(x)dx =C.
\]
We now consider the sequence of functions $(f_n)_{n\geq 1}$ 
defined by the recurrence relation:
\[
f_n(x) :\begin{cases}
& f_{n-1}(x+b-a) + f_{n-1}(x+c-a) - f_{n-1}(x+d-a), \\
& \qquad \qquad \quad \text{if } x \in [a-n(b-a),a-(n-1)(b-a)), \\
& f_{n-1}(x), \quad \text{if } x \in [a-(n-1)(b-a),d+(n-1)(d-c)], \\
& f_{n-1}(x+a-d) + f_{n-1}(x+c-d) - f_{n-1}(x+b-d), \\
& \qquad \qquad \quad \text{if } x \in (d+(n-1)(d-c),d+n(d-c)].
\end{cases}
\]
Actually, for every $n$, $f_n$ is an extension of $f_{n-1}$; 
it is then streightforward to verify that each $f_n$ is well defined and continuous on
$[a-n(b-a),d+n(d-c)]$, and that $f_n$ converges to some $f \in \cont(\RR)$. Such limit function,
by defintition, also verify the pointwise relation \eqref{eq:cond_punt}, hence, 
by Lemma \ref{lem:cond_puntuale}, its integral on $\sigma(I)$ does not depend on 
the translation $\sigma$. We conclude observing that, when $\sigma$ is the identity
\[
\int_I f(x)dx = \int_I f_0(x)dx = C.
\]
\end{proof}

\begin{remark}\label{rem:unioni_finite1}
Both Lemma \ref{lem:cond_puntuale} and Proposition \ref{propo:false}
still hold when $I$ is the disjoint union of more then two compact intervals,
that is 
\[
I = \bigcup_{i=1}^N [a_i,b_i], \quad a_1 < b_1 < a_2 < \ldots < a_N < b_N.
\]
We omit the proofs since they are the perfect analogue of the ones we have proposed.
\end{remark}

When the set $\Sigma'$ coincide with the set $\Sigma$ of all isometries of the real line
(i.e., translations, reflections and their compositions), then the situation turns out to be 
quite different and, instead of the negative result of Proposition \ref{propo:false}, 
we obtain a necessary 
and sufficient condition  in order the Pompeiu's conjecture to hold.
In a similar way to the statement of  Proposition \ref{propo:false},
in the argument we are going to carry out we will consider a set $I=[a,b]\cup[c,d]$,
with $a<b<c<d$ arbitrarily fixed real numbers.
Since the problem is clearly isometric-invariant,
the fundamental quantities involved are the lengths of the two intervals $[a,b]$,$[c,d]$ 
and of the hole between them.
Thus, we define
\[
\ell := b-a, \qquad H := c-b, \qquad L := d-c,
\] 
which yields the equality $I=[0,\ell] \cup [\ell+H,\ell+H+L]$.
Moreover, since reflections are also allowed, it is not restrictive 
to assume that $L\geq\ell$.
Furthermore, given $\alpha < \beta < \gamma < \delta$ and a (measurable) function $f:\RR \to \RR$,
we define
\[ \label{notazione}
[\lambda^{\pm}, \xi, \Lambda^{\pm};\alpha] = [\lambda^{\pm}, \xi, \Lambda^{\pm};\alpha]_f := 
\pm\int_{\alpha}^{\beta} f(x) dx \pm\int_{\gamma}^{\delta} f(x) dx,
\]
whenever $\lambda := \beta-\alpha$, $\Lambda := \delta-\gamma$ and $\xi := \gamma -\beta$.
With this notation 
\begin{equation}\label{eq:hpconj}
\left( \exists C \in \RR   \quad \forall \sigma \in \Sigma   \quad \int_{\sigma(I)}f = C  \right)
\quad \iff \quad
\left( 
\begin{array}{c}
\exists C \in \RR   \quad \forall x \in \RR   \\ 
\\ 
\left[\ell^+,H,L^+; x\right] = \left[L^+,H,\ell^+;x\right]=C
\end{array}
\right).
\end{equation}
\begin{lemma}\label{lem:hole}
If $f:\RR \to\RR$ is such that one of the two equivalent assumptions of \eqref{eq:hpconj}
holds (with $\ell,L,H$ as above), then letting $H'=3H+L+\ell$ we have the equalities
\[
\left[\ell^+,H',L^+;x\right] = C = \left[L^+,H',\ell^+;x\right] \quad \text{for every }x \in \RR.
\]
Moreover,
\begin{equation}\label{eq:equiv}
\frac{L-\ell}{L+H} \in \QQ 
\quad \iff \quad 
\frac{L-\ell}{L+H'} \in \QQ.
\end{equation}
\end{lemma}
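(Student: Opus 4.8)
The plan is to prove the two assertions of Lemma~\ref{lem:hole} separately, both by exploiting the ``translation-type'' part of the hypothesis already isolated in Lemma~\ref{lem:cond_puntuale}. Observe first that the assumption $[\ell^+,H,L^+;x]=[L^+,H,\ell^+;x]=C$ for every $x$ in particular says that the translation-condition (I) of Lemma~\ref{lem:cond_puntuale} holds for the two-interval set $I=[0,\ell]\cup[\ell+H,\ell+H+L]$; hence, by the equivalence in that lemma, the pointwise identity \eqref{eq:cond_punt} holds. Equivalently, writing it in the ``$[\cdot]$'' notation, for all $x$
\[
\int_{x}^{x+\ell} f + \int_{x+\ell+H}^{x+\ell+H+L} f = C,
\]
and the same with $\ell$ and $L$ interchanged.

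For the first assertion I would show how to ``enlarge the hole'' from $H$ to $H'=3H+L+\ell$ while keeping the integral constant. The idea is that sliding the right interval of $I$ rigidly to the right by the amount $H'-H=2H+L+\ell$ should not change the value of the functional, because that displacement can be decomposed into elementary moves each of which is controlled by the given translation identities. Concretely, I would start from $[\ell^+,H,L^+;x]=C$ and repeatedly apply the pointwise relation \eqref{eq:cond_punt} (and its reflected companion) to rewrite $\int_{x+\ell+H}^{x+\ell+H+L} f$ in terms of integrals over intervals that are shifted further right, exactly mimicking the inductive ``one step of the recurrence'' used in the proof of Proposition~\ref{propo:false}. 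The bookkeeping is chosen so that after finitely many steps the two outer integrals are over $[x,x+\ell]$ and $[x+\ell+H',x+\ell+H'+L]$, with the intermediate contributions telescoping to zero; the specific combination $H'=3H+L+\ell$ is presumably exactly what makes the telescoping close up (one shift of size $\ell$, one of size $L$, and two of size $H$, say). The reflected identity is needed to get the matching statement $[L^+,H',\ell^+;x]=C$. I expect this telescoping/change-of-variables computation to be the main obstacle: it is not deep, but getting the index arithmetic right so that exactly the claimed $H'$ appears, with all the interior terms cancelling, is delicate and is the real content of the lemma.

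For the equivalence \eqref{eq:equiv} the plan is purely arithmetic. Since $H'=3H+L+\ell$ we have $L+H'=L+3H+L+\ell=(L+H)+ (2H+L+\ell)$; more usefully, $L+H' = 3(L+H) - (2L-\ell) + \ldots$ — rather than guess, I would simply write $L+H'$ as an integer combination of $L+H$ and $L-\ell$ and vice versa. Indeed $L+H'=3H+2L+\ell = 3(H+L) -(L-\ell) + (?)$; carrying this out, one checks that $L+H'$ differs from a rational multiple of $L+H$ by a rational multiple of $L-\ell$, say $L+H' = p(L+H)+q(L-\ell)$ with $p,q\in\QQ$ and $p\neq 0$. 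Then
\[
\frac{L-\ell}{L+H'}=\frac{L-\ell}{p(L+H)+q(L-\ell)}
=\frac{(L-\ell)/(L+H)}{\,p+q\,(L-\ell)/(L+H)\,},
\]
so $\frac{L-\ell}{L+H}\in\QQ$ forces $\frac{L-\ell}{L+H'}\in\QQ$; and solving the relation for $L+H$ as a $\QQ$-combination of $L+H'$ and $L-\ell$ gives the converse by the symmetric computation. The only thing to check is that the relevant coefficients are nonzero (so that no division by zero occurs and the implications are genuinely reversible), which is immediate from $\ell\le L$ and $H\ge 0$ — in fact $L+H'>0$ and $L+H>0$ always, and the coefficient of $L+H$ in the expression for $L+H'$ is a positive integer.
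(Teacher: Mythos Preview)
Your treatment of \eqref{eq:equiv} is correct and is exactly the paper's argument once you finish the arithmetic you left as ``$(?)$'': one has $L+H'=L+(3H+L+\ell)=3(L+H)-(L-\ell)$, so with $\beta=\dfrac{L-\ell}{L+H}$ and $\alpha=\dfrac{L-\ell}{L+H'}$ this gives $\alpha=\dfrac{\beta}{3-\beta}$ and, inverting, $\beta=\dfrac{3\alpha}{1+\alpha}$. Both implications follow, and the denominators never vanish since $0\le\beta<1$.

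For the first assertion, however, what you have written is a hope rather than a proof, and the plan contains a misdirection. Routing through the pointwise identity \eqref{eq:cond_punt} and the recurrence of Proposition~\ref{propo:false} is a detour: those encode only the \emph{translation} hypothesis for the ordered set $[0,\ell]\cup[\ell+H,\ell+H+L]$, and that hypothesis alone does \emph{not} imply $[\ell^+,H',L^+;x]=C$. Indeed, Proposition~\ref{propo:false} shows that the translation condition is satisfied by an enormous family of $f$ (parametrised by an essentially free choice on $[0,\ell+H+L]$), and for a generic such $f$ the analogous condition with the larger hole $H'$ fails. So your remark that ``the reflected identity is needed to get the matching statement $[L^+,H',\ell^+;x]=C$'' understates matters: both hypotheses must be combined already for the \emph{first} equality.

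The paper's argument is a three-step add-and-subtract performed directly on the integral hypotheses, with no appeal to Lemma~\ref{lem:cond_puntuale}. From $[\ell^+,H,L^+;x]=C$ subtract $[L^+,H,\ell^+;x+\ell+H]=C$: the two $L$-intervals coincide and cancel, leaving $[\ell^+,\,2H+L,\,\ell^-;x]=0$. Now add $[\ell^+,H,L^+;x+\ell+2H+L]=C$: its left $\ell$-interval coincides with the right $\ell$-interval of the previous line, and one reads off $[\ell^+,\,3H+L+\ell,\,L^+;x]=C$. This both proves the claim and explains the specific value $H'=3H+L+\ell$: it is exactly the gap produced by sliding one reflected copy of $I$ between two unreflected copies so that adjacent blocks overlap perfectly. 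The companion equality follows by interchanging $\ell$ and $L$ throughout.
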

\begin{proof}
On the one hand we have, for every $x \in \RR$,
\[
[\ell^+, H,  L^+;x] =C \quad \text{and} \quad [L^+, H, l^+;\ell+H+x] =C;
\]
hence, subtracting term by term the latter equation from the former one (we refer to Figure \ref{fig:STEP1}),
\[
[\ell^+, H+L+H, \ell^-;x]=0 \quad \text{for every } x \in \RR.
\]
On the other hand, since $[\ell^+, H, L^+;\ell+H+L+H+x] = C$ also holds for any $x \in \RR$, 
then summing term by term we obtain the first of the required equalities
\[
[\ell^+,H+L+H+\ell+H,L^+;x] = C.
\]
In a completely symmetric way, it is also proved the second one.
\begin{figure}[t!]
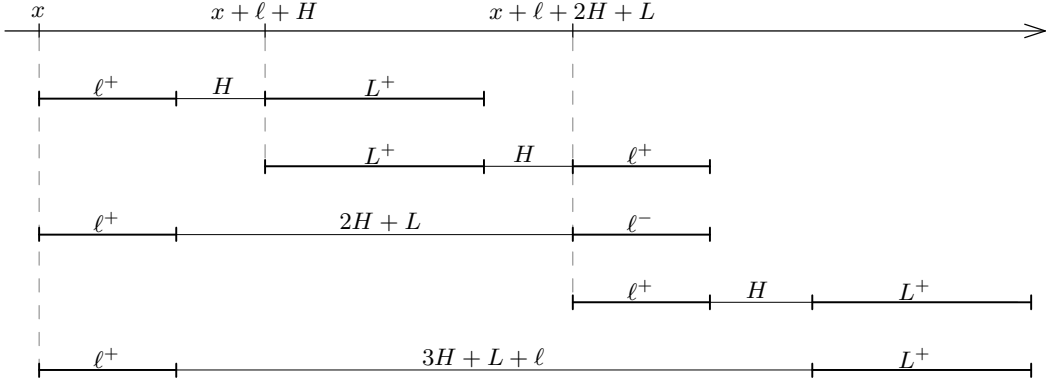

\begin{center}
\begin{texdraw}
\small
\drawdim cm  \setunitscale 0.9
\textref h:C v:C
\linewd 0.015
\arrowheadtype t:V
\arrowheadsize l:0.3 w:0.2
\move (-0.5 1) \avec(14.7 1)
\move (0 1.1) \lvec(0 0.9) \htext (0 1.3) {$x$}
\move (3.3 1.1) \lvec(3.3 0.9) \htext (3.3 1.3) {$x+\ell+H$}
\move (7.8 1.1) \lvec(7.8 0.9) \htext (7.8 1.3) {$x+\ell+2H+L$}
\linewd 0.01
\setgray 0.5 \lpatt (0.2 0.2)
\move (0 0.9) \lvec(0 -4)
\move (3.3 0.9) \lvec(3.3 -1)
\move (7.8 0.9) \lvec(7.8 -3)
\lpatt()
\setgray 0
\move (0 0) \lvec (6.5 0)
\linewd 0.03
\move (0 0) \lvec (2 0)
\move (3.3 0) \lvec (6.5 0)
\move (0 -.1) \lvec (0 0.1)
\move (2 -.1) \lvec (2 0.1)
\move (3.3 -.1) \lvec (3.3 0.1)
\move (6.5 -.1) \lvec (6.5 0.1)
\htext (1 .2) {$\ell^+$}
\htext (2.7 .2) {$H$}
\htext (5 .2) {$L^+$}
\linewd 0.01
\move (3.3 -1) \lvec (9.8 -1)
\linewd 0.03
\move (3.3 -1) \lvec (6.5 -1)
\move (7.8 -1) \lvec (9.8 -1)
\move (3.3 -1.1) \lvec (3.3 -0.9)
\move (6.5 -1.1) \lvec (6.5 -0.9)
\move (7.8 -1.1) \lvec (7.8 -0.9)
\move (9.8 -1.1) \lvec (9.8 -0.9)
\textref h:C v:C
\htext (5 -0.8) {$L^+$}
\htext (7.1 -0.8) {$H$}
\htext (8.8 -0.8) {$\ell^+$}
\linewd 0.01
\move (0 -2) \lvec (9.8 -2)
\linewd 0.03
\move (0 -2) \lvec (2 -2)
\move (7.8 -2) \lvec (9.8 -2)
\move (0 -2.1) \lvec (0 -1.9)
\move (2 -2.1) \lvec (2 -1.9)
\move (7.8 -2.1) \lvec (7.8 -1.9)
\move (9.8 -2.1) \lvec (9.8 -1.9)
\textref h:C v:C
\htext (1 -1.8) {$\ell^+$}
\htext (5 -1.8) {$2H+L$}
\htext (8.8 -1.8) {$\ell^-$}
\linewd 0.01
\move (7.8 -3) \lvec (14.3 -3)
\linewd 0.03
\move (7.8 -3) \lvec (9.8 -3)
\move (11.3 -3) \lvec (14.5 -3)
\move (7.8 -3.1) \lvec (7.8 -2.9)
\move (9.8 -3.1) \lvec (9.8 -2.9)
\move (11.3 -3.1) \lvec (11.3 -2.9)
\move (14.5 -3.1) \lvec (14.5 -2.9)
\textref h:C v:C
\htext (8.8 -2.8) {$\ell^+$}
\htext (10.5 -2.8) {$H$}
\htext (12.8 -2.8) {$L^+$}
\linewd 0.01
\move (0 -4) \lvec (14.5 -4)
\linewd 0.03
\move (0 -4) \lvec (2 -4)
\move (11.3 -4) \lvec (14.5 -4)
\move (0 -4.1) \lvec (0 -3.9)
\move (2 -4.1) \lvec (2 -3.9)
\move (11.3 -4.1) \lvec (11.3 -3.9)
\move (14.5 -4.1) \lvec (14.5 -3.9)
\textref h:C v:C
\htext (1 -3.8) {$\ell^+$}
\htext (6.5 -3.8) {$3H+L+\ell$}
\htext (12.8 -3.8) {$L^+$}
\normalsize
\end{texdraw}
\end{center}
\caption{procedure to reduce to the case where $H>L=\max(L,\ell)$.\label{fig:STEP1}}
\end{figure}

As for the equivalence displayed in \eqref{eq:equiv}, letting $\alpha = \frac{L-\ell}{L+H'}$ 
and $\beta = \frac{L-\ell}{L+H}$ we see that
$\alpha=\frac{\beta}{3-\beta}$ and $\beta= \frac{3\alpha}{1+\alpha}$.
\end{proof}
From now on, $f$ will be a (arbitrarily fixed) continuous function for which one of the two equivalent conditions 
given by \eqref{eq:hpconj} holds. 
For the sake of simplicity, in the next results we will also use the following labelling 
\begin{itemize}
\item[(H1)] $\ds \frac{\ell}{L} \notin \QQ$;
\item[(H2)] $\ds \frac{L-\ell}{2(L+H)} = \frac{n}{m}$, with $n$ and $m$ coprime natural numbers;
\item[($\lnot$H2)] $\ds \frac{L-\ell}{L+H} \notin \QQ$.
\end{itemize}
\begin{lemma}\label{le:1}
If (H1), then $f$ is $(L-\ell)$-periodic.
\end{lemma}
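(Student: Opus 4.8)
The plan is to convert the two integral identities of \eqref{eq:hpconj} into two periodicity properties of the single auxiliary function
\[
W(y):=\int_{y}^{\,y+(L-\ell)}f(t)\,dt ,
\]
and then to combine them by means of (H1). I shall also write $A(x):=\int_x^{x+\ell}f$ and $B(x):=\int_x^{x+L}f$, so that $B(x)=A(x)+W(x+\ell)$ for every $x$.

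First I would subtract the two identities $[\ell^+,H,L^+;x]=C$ and $[L^+,H,\ell^+;x]=C$. The two ``outer'' integrals share the right endpoint $x+\ell+H+L=x+L+H+\ell$, so splitting the longer one at $x+L+H$ makes everything cancel except $\int_{x+\ell}^{x+L}f=\int_{x+\ell+H}^{x+L+H}f$; that is, $W$ is $H$-periodic. Next I would read the first identity as $A(x)=C-B(x+\ell+H)$ and the second as $B(x)=C-A(x+L+H)$, and substitute one into the other, obtaining $A(x)=A\big(x+(\ell+L+2H)\big)$. Hence $A$, and then also $B$, is $(\ell+L+2H)$-periodic, and since $W(x+\ell)=B(x)-A(x)$ so is $W$. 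Subtracting the two periods, $W$ is $\big((\ell+L+2H)-2H\big)$-periodic, i.e.\ $(\ell+L)$-periodic.

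Thus $W$ is a continuous function admitting the two periods $H$ and $\ell+L$, and this is the one point at which (H1), $\ell/L\notin\QQ$, is to be invoked: it is used to guarantee that the two periods are incommensurable, so that the additive group of periods of $W$ is dense in $\RR$; a continuous function with a dense group of periods is constant, hence $W$ is constant. Differentiating, $0\equiv W'(y)=f\big(y+(L-\ell)\big)-f(y)$ for all $y$, which is exactly the assertion that $f$ is $(L-\ell)$-periodic. (The same argument can be run pointwise, via Lemma~\ref{lem:cond_puntuale}: the differentiated identities say that $f(x+H)-f(x)$ is $(L-\ell)$-periodic, while $f(x+\ell)-f(x)$ and $f(x+L)-f(x)$ are $(\ell+L+2H)$-periodic, and subtracting the latter two gives the two periods $H$ and $\ell+L$ for $x\mapsto f(x+L-\ell)-f(x)$.)

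The step I expect to be the real obstacle is precisely this last one: extracting from (H1) the incommensurability that is actually needed. Note that the periods delivered by the computation are $H$ and $\ell+L$, not $\ell$ and $L$; if these happened to be commensurable the argument above would break down and a separate, more delicate treatment would be required — presumably the point at which the surrounding case split on whether $\tfrac{L-\ell}{L+H}\in\QQ$ enters. Granting the incommensurability, one then needs only the elementary fact that a continuous function with a dense group of periods is constant; the two periodicity properties of $W$, by contrast, come out of purely formal rearrangements of the hypothesis and are routine.
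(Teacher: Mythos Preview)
Your argument coincides with the paper's almost line for line: the paper defines the same auxiliary function $\varphi=W$, derives $H$-periodicity by subtracting the two hypotheses at the same base point, derives $(H+L+\ell)$-periodicity by subtracting them with a shift of $L-\ell$ (your route via $A(x)=A(x+\ell+L+2H)$ is a harmless variant, since $H$-periodicity reduces either conclusion to $(L+\ell)$-periodicity of $W$), and then asserts that the two periods are incommensurable, so that $W$ is constant.

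The obstacle you single out is genuine, and it is not resolved in the paper either: the paper simply writes ``since $(H+L+\ell)/H$ is not rational'' and moves on, but (H1) alone does not force this. Worse, the gap is fatal to the lemma as stated. Take $\ell=1$, $L=\sqrt2$, $H=1+\sqrt2$, so that $\ell/L\notin\QQ$ while $H=L+\ell$ makes the two periods coincide. Then $f(x)=\cos\bigl(2\pi x/(1+\sqrt2)\bigr)$ satisfies both integral identities in \eqref{eq:hpconj} with $C=0$: indeed, for $g(x)=e^{i\omega x}$ with $\omega=2\pi/(1+\sqrt2)$ one has $e^{i\omega H}=1$ and $e^{i\omega(L+\ell)}=1$, and a direct check gives
\[
(e^{i\omega\ell}-1)+e^{i\omega(\ell+H)}(e^{i\omega L}-1)=e^{i\omega(\ell+H+L)}-1=0,
\]
and symmetrically for $[L^+,H,\ell^+;x]$. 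Yet $f$ is not $(L-\ell)$-periodic, since $\omega(L-\ell)=2\pi(\sqrt2-1)/(1+\sqrt2)=2\pi(3-2\sqrt2)\notin 2\pi\ZZ$. So no extra hypothesis rescues the step you were worried about; the statement itself fails for these parameters (and, incidentally, here $(L-\ell)/(L+H)=(5-3\sqrt2)/7\notin\QQ$, so the surrounding case split does not save it either).

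In short: your proof and the paper's are the same proof, your diagnosis of the weak point is correct, and the weak point is an actual error rather than a removable technicality.
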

\begin{proof}
Let us define the auxiliary function
\[
\varphi(x) := \int_{x}^{x+(L-\ell)} f(s) ds,\quad \text{for every } x \in \RR;
\]
we are then reduced to prove that $\varphi$ is a constant function.
By assumption, for every $x \in \RR$, there holds
\[
[L^+, H, \ell^+;x] =C \quad \text{and} \quad [\ell^+, H, L^+;x] =C,
\]
hence, subtracting the second equation from the first one (the reader can visualize
such a procedure in Figure \ref{fig:STEP2}), we obtain
\[
[(L-\ell)^+,H-(L-\ell),(L-\ell)^-;x+\ell]=0, \quad \text{for every } x \in \RR, 
\]
or, equivalently, 
\[
\varphi(x) = \varphi(x+H), \quad \text{for every } x \in \RR.
\]
\begin{figure}[t!]
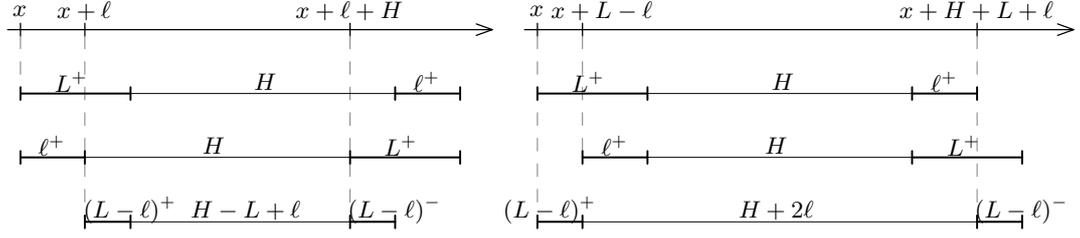

\begin{center}
\begin{texdraw}
\small
\drawdim cm  \setunitscale 0.85
\textref h:C v:C
\linewd 0.015
\arrowheadtype t:V
\arrowheadsize l:0.25 w:0.18
\move (-0.2 1) \avec(7.3 1)
\move (0 1.1) \lvec(0 0.9) \htext (0 1.3) {$x$}
\move (1 1.1) \lvec(1 0.9) \htext (1 1.3) {$x+\ell$}
\move (5.1 1.1) \lvec(5.1 0.9) \htext (5.1 1.3) {$x+\ell+H$}
\linewd 0.01
\setgray 0.5 \lpatt (0.2 0.2)
\move (0 0.9) \lvec(0 0)
\move (1 0.9) \lvec(1 -2)
\move (5.1 0.9) \lvec(5.1 -2)
\lpatt()
\setgray 0
\move (0 0) \lvec (6.8 0)
\linewd 0.03
\move (0 0) \lvec (1.7 0)
\move (5.8 0) \lvec (6.8 0)
\move (0 -.1) \lvec (0 0.1)
\move (1.7 -.1) \lvec (1.7 0.1)
\move (5.8 -.1) \lvec (5.8 0.1)
\move (6.8 -.1) \lvec (6.8 0.1)
\htext (0.8 .2) {$L^+$}
\htext (3.8 .2) {$H$}
\htext (6.3 .2) {$\ell^+$}
\linewd 0.01
\move (0 -1) \lvec (6.8 -1)
\linewd 0.03
\move (0 -1) \lvec (1 -1)
\move (5.1 -1) \lvec (6.8 -1)
\move (0 -1.1) \lvec (0 -0.9)
\move (1 -1.1) \lvec (1 -0.9)
\move (5.1 -1.1) \lvec (5.1 -0.9)
\move (6.8 -1.1) \lvec (6.8 -0.9)
\htext (0.5 -0.8) {$\ell^+$}
\htext (3 -0.8) {$H$}
\htext (5.9 -0.8) {$L^+$}
\linewd 0.01
\move (1 -2) \lvec (5.8 -2)
\linewd 0.03
\move (1 -2) \lvec (1.7 -2)
\move (5.1 -2) \lvec (5.8 -2)
\move (1 -2.1) \lvec (1 -1.9)
\move (1.7 -2.1) \lvec (1.7 -1.9)
\move (5.1 -2.1) \lvec (5.1 -1.9)
\move (5.8 -2.1) \lvec (5.8 -1.9)
\htext (1.7 -1.8) {$(L-\ell)^+$}
\htext (3.5 -1.8) {$H-L+\ell$}
\htext (5.8 -1.8) {$(L-\ell)^-$}
\linewd 0.015
\arrowheadtype t:V
\arrowheadsize l:0.25 w:0.18
\move (7.8 1) \avec(16.3 1)
\move (8 1.1) \lvec(8 0.9) \htext (8 1.3) {$x$}
\move (8.7 1.1) \lvec(8.7 0.9) \htext (9 1.3) {$x+L-\ell$}
\move (14.8 1.1) \lvec(14.8 0.9) \htext (14.8 1.3) {$x+H+L+\ell$}
\linewd 0.01
\setgray 0.5 \lpatt (0.2 0.2)
\move (8 0.9) \lvec(8 -2)
\move (8.7 0.9) \lvec(8.7 -1)
\move (14.8 0.9) \lvec(14.8 -2)
\lpatt()
\setgray 0
\move (8 0) \lvec (14.8 0)
\linewd 0.03
\move (8 0) \lvec (9.7 0)
\move (13.8 0) \lvec (14.8 0)
\move (8 -.1) \lvec (8 0.1)
\move (9.7 -.1) \lvec (9.7 0.1)
\move (13.8 -.1) \lvec (13.8 0.1)
\move (14.8 -.1) \lvec (14.8 0.1)
\htext (8.8 .2) {$L^+$}
\htext (11.8 .2) {$H$}
\htext (14.3 .2) {$\ell^+$}
\linewd 0.01
\move (8.7 -1) \lvec (15.5 -1)
\linewd 0.03
\move (8.7 -1) \lvec (9.7 -1)
\move (13.8 -1) \lvec (15.5 -1)
\move (8.7 -1.1) \lvec (8.7 -0.9)
\move (9.7 -1.1) \lvec (9.7 -0.9)
\move (13.8 -1.1) \lvec (13.8 -0.9)
\move (15.5 -1.1) \lvec (15.5 -0.9)
\htext (9.2 -0.8) {$\ell^+$}
\htext (11.7 -0.8) {$H$}
\htext (14.6 -0.8) {$L^+$}
\linewd 0.01
\move (8 -2) \lvec (15.5 -2)
\linewd 0.03
\move (8 -2) \lvec (8.7 -2)
\move (14.8 -2) \lvec (15.5 -2)
\move (8 -2.1) \lvec (8 -1.9)
\move (8.7 -2.1) \lvec (8.7 -1.9)
\move (14.8 -2.1) \lvec (14.8 -1.9)
\move (15.5 -2.1) \lvec (15.5 -1.9)
\htext (8.2 -1.8) {$(L-\ell)^+$}
\htext (11.7 -1.8) {$H+2\ell$}
\htext (15.5 -1.8) {$(L-\ell)^-$}
\normalsize
\end{texdraw}
\end{center}
\caption{the picture at left represents the procedure to obtain the $H$-periodicity for the 
function $\varphi$; at right, the one for the $(H+L+\ell)$-periodicity.\label{fig:STEP2}}
\end{figure}
On the other hand, the assumption can be read as
\[
[L^+, H, \ell^+;x] =C \quad \text{and} \quad [\ell^+, H, L^+;x+(L-\ell)] =C,
\]
implying
\(
[(L-\ell)^+,H+2\ell,(L-\ell)^-;x]=0,
\)
for every $x \in \RR$, or, equivalently,
\[
\varphi(x) = \varphi(x+H+L+\ell), \quad \text{for every } x \in \RR.
\]
Since $(H+L+\ell)/H$ is not rational, and the continuous function $\varphi$ 
is at the same time $H$-periodic and $(H+L+\ell)$-periodic, then $\varphi$ is necessarily constant.
\end{proof}
\begin{lemma}\label{le:2}
If ($\lnot$H2), then $f$ is $(L+\ell)$-periodic.
\end{lemma}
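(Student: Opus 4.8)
The plan is to argue in the same spirit as Lemma~\ref{le:1}: isolate one continuous auxiliary function, exhibit two incommensurable periods for it, conclude that it is constant, and translate this back into periodicity of $f$. Since the target is $(L+\ell)$-periodicity, the function to annihilate is $\Phi(x):=\int_x^{x+L+\ell}f$, whose constancy is equivalent --- upon differentiating and using continuity of $f$ --- to $f(x+L+\ell)=f(x)$ for all $x$. I also set $\eta(x):=\int_x^{x+\ell}f$ and $\delta(x):=\eta(x+H)-\eta(x)$.

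The first step is to compute $\Phi$ in two ways, one for each half of \eqref{eq:hpconj}. Splitting $[x,x+L+\ell]=[x,x+\ell]\cup[x+\ell,x+\ell+L]$ and using $[\ell^+,H,L^+;x-H]=C$ to rewrite $\int_{x+\ell}^{x+\ell+L}f$, one obtains $\Phi(x)=C+\eta(x)-\eta(x-H)=C+\delta(x-H)$. Splitting instead $[x,x+L+\ell]=[x,x+L]\cup[x+L,x+L+\ell]$ and using $[L^+,H,\ell^+;x]=C$ to rewrite $\int_x^{x+L}f$, one obtains $\Phi(x)=C-\bigl(\eta(x+L+H)-\eta(x+L)\bigr)=C-\delta(x+L)$. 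Comparing the two, $\delta(x+L)=-\delta(x-H)$ for every $x$, i.e. $\delta$ reverses sign under translation by $L+H$; in particular $\delta$ is $2(L+H)$-periodic.

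The second step is to find another period of $\delta$, and here I would bring in the ``hole function'' $w(x):=\int_x^{x+H}f$. The substitution $t\mapsto t+H$ gives $\delta(x)=\int_x^{x+\ell}\bigl(f(t+H)-f(t)\bigr)\,dt=\int_x^{x+\ell}w'(t)\,dt=w(x+\ell)-w(x)$. At the same time, since both configurations in \eqref{eq:hpconj} occupy a total span of length $\ell+H+L$, integrating $f$ over that span computed from each configuration gives $w(x+\ell)=\int_x^{x+\ell+H+L}f-C=\int_x^{x+L+H+\ell}f-C=w(x+L)$, so $w$ is $(L-\ell)$-periodic; hence $\delta=w(\cdot+\ell)-w$ is $(L-\ell)$-periodic too. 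Now $\delta$ is continuous and carries the two periods $L-\ell$ and $2(L+H)$. By ($\lnot$H2) we have $\tfrac{L-\ell}{L+H}\notin\QQ$ (so in particular $L\neq\ell$), whence $\tfrac{2(L+H)}{L-\ell}\notin\QQ$: the periods are incommensurable, $\delta$ is constant, and then $\Phi(x)=C+\delta(x-H)$ is constant, which is what we wanted.

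The interval decompositions and the change of variables are routine; the whole argument hinges on the observation that the $H$-increment of $\int_x^{x+\ell}f$ equals the $\ell$-increment of $\int_x^{x+H}f$. This identity is exactly what couples a period $2(L+H)$ --- coming from the discrepancy between the $\ell$-first and the $L$-first configurations --- with a period $L-\ell$ --- coming from the two configurations spanning equally long intervals --- and the hypothesis ($\lnot$H2) is precisely the incommensurability of $L-\ell$ and $L+H$.
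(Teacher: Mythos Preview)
Your argument is correct. Both you and the paper aim to show that $\Phi(x)=\int_x^{x+L+\ell}f$ is constant, but the routes diverge. The paper first invokes Lemma~\ref{lem:hole} to enlarge the hole so that $H>L$, then shows directly that $\Phi$ (their $\psi$) is simultaneously $2(L+H)$- and $2(\ell+H)$-periodic by combining the two configurations $[L^+,H,\ell^+;\cdot]$ and $[\ell^+,H,L^+;\cdot]$ with a shift of $L$ (respectively $\ell$); the hole-enlargement is needed only so that the bracket notation $[(L+\ell)^+,H-\ell,(L+\ell)^+;\cdot]$ makes literal sense. You instead pass through the auxiliary $\delta(x)=\eta(x+H)-\eta(x)$, obtain the anti-period $L+H$ for $\delta$ from the two decompositions of $\Phi$, and get the second period $L-\ell$ via the neat observation that the $H$-increment of $\int_x^{x+\ell}f$ equals the $\ell$-increment of $\int_x^{x+H}f$, the latter being $(L-\ell)$-periodic because the two configurations occupy the same total span. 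Your incommensurability check $\tfrac{L-\ell}{2(L+H)}\notin\QQ$ is exactly ($\lnot$H2), just as the paper's $\tfrac{\ell+H}{L+H}\notin\QQ$ is. The payoff of your route is that it dispenses entirely with Lemma~\ref{lem:hole}; the paper's route has the mild advantage of exhibiting the two periods of $\Phi$ itself without an intermediate function.
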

\begin{proof} 
Thanks to Lemma \ref{lem:hole}, we may assume (up to replacing $H$ with $H'=3H+L+\ell$) that $H>L$;
notice, in particular, that $H'$ still satisfies ($\lnot$H2) once $H$ does.

Let us define the auxiliary function
\[
\psi(x) := \int_{x}^{x+(L+\ell)} f(s) ds,\quad \text{for every } x \in \RR:
\]
we claim that $\psi$ is at the same time 
$2(L+H)-$ and $2(\ell+H)$-periodic. Since assumption ($\lnot$H2) is equivalent to $\frac{\ell+H}{L+H} \notin \QQ$
(just write $\frac{\ell+H}{L+H} = 1-\frac{L-\ell}{L+H}$), we will then conclude that $\psi$ is a constant function,
hence $f$ is $(L+\ell)$-periodic. 

\begin{figure}[t!]
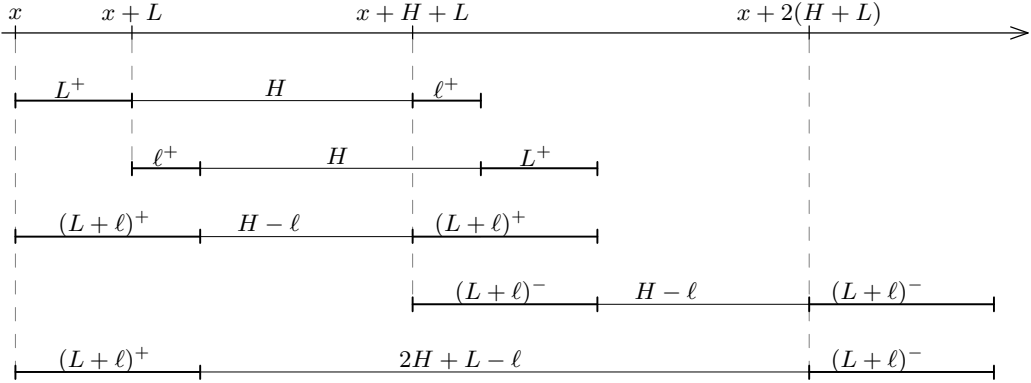

\begin{center}
\begin{texdraw}
\small
\drawdim cm  \setunitscale 0.9
\textref h:C v:C
\linewd 0.015
\arrowheadtype t:V
\arrowheadsize l:0.25 w:0.18
\move (-0.2 1) \avec(14.8 1)
\move (0 1.1) \lvec(0 0.9) \htext (0 1.3) {$x$}
\move (1.7 1.1) \lvec(1.7 0.9) \htext (1.7 1.3) {$x+L$}
\move (5.8 1.1) \lvec(5.8 0.9) \htext (5.8 1.3) {$x+H+L$}
\move (11.6 1.1) \lvec(11.6 0.9) \htext (11.6 1.3) {$x+2(H+L)$}
\linewd 0.01
\setgray 0.5 \lpatt (0.2 0.2)
\move (0 0.9) \lvec(0 -4)
\move (1.7 0.9) \lvec(1.7 -1)
\move (5.8 0.9) \lvec(5.8 -3)
\move (11.6 0.9) \lvec(11.6 -4)
\lpatt()
\setgray 0
\move (0 0) \lvec (6.8 0)
\linewd 0.03
\move (0 0) \lvec (1.7 0)
\move (5.8 0) \lvec (6.8 0)
\move (0 -.1) \lvec (0 0.1)
\move (1.7 -.1) \lvec (1.7 0.1)
\move (5.8 -.1) \lvec (5.8 0.1)
\move (6.8 -.1) \lvec (6.8 0.1)
\htext (0.8 .2) {$L^+$}
\htext (3.8 .2) {$H$}
\htext (6.3 .2) {$\ell^+$}
\linewd 0.01
\move (1.7 -1) \lvec (8.5 -1)
\linewd 0.03
\move (1.7 -1) \lvec (2.7 -1)
\move (6.8 -1) \lvec (8.5 -1)
\move (1.7 -1.1) \lvec (1.7 -0.9)
\move (2.7 -1.1) \lvec (2.7 -0.9)
\move (6.8 -1.1) \lvec (6.8 -0.9)
\move (8.5 -1.1) \lvec (8.5 -0.9)
\htext (2.2 -0.8) {$\ell^+$}
\htext (4.7 -0.8) {$H$}
\htext (7.6 -0.8) {$L^+$}
\linewd 0.01
\move (0 -2) \lvec (6.8 -2)
\linewd 0.03
\move (0 -2) \lvec (2.7 -2)
\move (5.8 -2) \lvec (8.5 -2)
\move (0 -2.1) \lvec (0 -1.9)
\move (2.7 -2.1) \lvec (2.7 -1.9)
\move (5.8 -2.1) \lvec (5.8 -1.9)
\move (8.5 -2.1) \lvec (8.5 -1.9)
\htext (1.3 -1.8) {$(L+\ell)^+$}
\htext (3.7 -1.8) {$H-\ell$}
\htext (6.8 -1.8) {$(L+\ell)^+$}
\linewd 0.01
\move (5.8 -3) \lvec (14.3 -3)
\linewd 0.03
\move (5.8 -3) \lvec (8.5 -3)
\move (11.6 -3) \lvec (14.3 -3)
\move (5.8 -3.1) \lvec (5.8 -2.9)
\move (8.5 -3.1) \lvec (8.5 -2.9)
\move (11.6 -3.1) \lvec (11.6 -2.9)
\move (14.3 -3.1) \lvec (14.3 -2.9)
\htext (7.1 -2.8) {$(L+\ell)^-$}
\htext (9.5 -2.8) {$H-\ell$}
\htext (12.6 -2.8) {$(L+\ell)^-$}
\linewd 0.01
\move (0 -4) \lvec (14.3 -4)
\linewd 0.03
\move (0 -4) \lvec (2.7 -4)
\move (11.6 -4) \lvec (14.3 -4)
\move (0 -4.1) \lvec (0 -3.9)
\move (2.7 -4.1) \lvec (2.7 -3.9)
\move (11.6 -4.1) \lvec (11.6 -3.9)
\move (14.3 -4.1) \lvec (14.3 -3.9)
\htext (1.3 -3.8) {$(L+\ell)^+$}
\htext (6.5 -3.8) {$2H+L-\ell$}
\htext (12.6 -3.8) {$(L+\ell)^-$}
\normalsize
\end{texdraw}
\end{center}
\caption{procedure to obtain the $(L+H)$-periodicity of $\psi$. \label{fig:figura_le2}}
\end{figure}

By assumption, we see that
\[
[L^+, H, \ell^+;x] = C \quad \text{and} \quad [\ell^+, H, L^+;x+L)] =C, \quad \text{for every } x \in \RR;
\]
taking into account that $H>L$, we may sum both terms of the above equalities (see Figure \ref{fig:figura_le2})
to obtain
\begin{equation} \label{eq:adhoc1}
[(L+\ell)^+, H-\ell , (L+\ell)^+;x] = 2C, \quad \text{for every } x \in \RR.
\end{equation}
By translating of $L+H$ the above equality and changing the sign, 
we see that
\begin{equation} \label{eq:adhoc2}
[(L+\ell)^-, H-\ell , (L+\ell)^-;x+L+H] = -2C, \quad \text{for every } x \in \RR;
\end{equation}
again, summing both terms of \eqref{eq:adhoc1} and \eqref{eq:adhoc2}, it follows that
\[
[(L+\ell)^+, 2H+L-\ell , (L+\ell)^-;x] = 0, \quad \text{for every } x \in \RR,
\]
which is equivalent to the $2(H+L)$ periodicity of $\psi$.
Swapping $L$ with $\ell$ we obtain the $2(H+\ell)$ periodicity of $\psi$.
\end{proof}
\begin{lemma}\label{le:4}
If (H1), then $f$ is $(L+H)$-periodic.
\end{lemma}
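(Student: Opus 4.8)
The plan is to imitate the mechanism of Lemmas~\ref{le:1} and~\ref{le:2}: exhibit an auxiliary function of the form $x\mapsto\int_x^{x+\lambda}f$ that turns out to admit two periods whose ratio is irrational exactly because of (H1), so that continuity forces it to be constant, and then read off a periodicity of $f$ by differentiating. The novelty, compared with Lemma~\ref{le:1}, is that I would feed in Lemma~\ref{le:1} itself: under (H1), $f$ is already $(L-\ell)$-periodic.

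Set $\omega:=L-\ell$ and let $P$ be the common value of $\int_t^{t+\omega}f$ (well defined because $f$ is $\omega$-periodic). Using $[\ell^+,H,L^+;x]=C$ and absorbing one length-$\omega$ slice of the second integral into $P$ via the $\omega$-periodicity of $f$, one gets, for $g(x):=\int_x^{x+\ell}f$, the identity $g(x)+g(x+\ell+H)=C-P$ for every $x\in\RR$. Since $g$ is itself $\omega$-periodic, replacing $x$ by $x+\omega$ and using $\omega+\ell=L$ turns this into $g(x)+g(x+L+H)=C-P$; applying the latter once more at $x+L+H$ and subtracting yields $g(x+2(L+H))=g(x)$, i.e. $g$ is $2(L+H)$-periodic. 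Now put $\Phi(x):=\int_x^{x+2(L+H)}f$. Then $\Phi(x+\omega)=\Phi(x)$ by $\omega$-periodicity of $f$, while $\Phi(x+\ell)-\Phi(x)=\int_{x+2(L+H)}^{x+2(L+H)+\ell}f-\int_x^{x+\ell}f=g(x+2(L+H))-g(x)=0$, so $\Phi$ is both $\omega$- and $\ell$-periodic. As (H1) is equivalent to $\ell/(L-\ell)\notin\QQ$, the continuous function $\Phi$ has two incommensurable periods, hence is constant, and differentiation gives $f(x+2(L+H))=f(x)$ for all $x$.

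The step I expect to be the real obstacle is the last one: extracting the full period $L+H$, and not merely $2(L+H)$, from the argument. It would be enough to know that $g$ is genuinely $(L+H)$-periodic (not just "antiperiodic", as the identity $g(x)+g(x+L+H)=C-P$ asserts): then $g\equiv\tfrac{C-P}{2}$, so $f$ is $\ell$-periodic, and being simultaneously $\ell$- and $\omega$-periodic with irrational ratio it is constant, hence $(L+H)$-periodic. This upgrade is automatic when $\frac{L-\ell}{L+H}\notin\QQ$, i.e. under ($\lnot$H2); but in that regime $f$ is already constant, since Lemmas~\ref{le:1} and~\ref{le:2} endow it with the incommensurable periods $L-\ell$ and $L+\ell$. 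So the delicate case is $\frac{L-\ell}{L+H}\in\QQ$, and there I would try to produce one more period of $g$ — or of a variant of $\Phi$ — by playing the identity above against the reflected condition $[L^+,H,\ell^+;x]=C$ and, if necessary, against the hole-enlargement of Lemma~\ref{lem:hole}; I expect this case analysis and bookkeeping, rather than a single clean trick, to be where the weight of the proof lies.
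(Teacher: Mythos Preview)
Your route to $2(L+H)$-periodicity is correct and a good deal shorter than the paper's. Where you manipulate the integral condition directly to obtain $g(x)+g(x+L+H)=C-P$ for $g(x)=\int_x^{x+\ell}f$, the paper instead subtracts the mean, sets $\tilde g:=f-k$ with $\int_x^{x+(L-\ell)}\tilde g=0$, proves by induction on $m$ that
\[
\int_x^{x+t}\tilde g+\int_{x+L+H}^{x+L+H+t}\tilde g=mC_{\tilde g}\qquad\text{for every }t=mL+n(L-\ell),\ m,n\in\ZZ,
\]
differentiates in $t$ using the density of $\{mL+n(L-\ell):m,n\in\ZZ\}$ in $\RR$ to obtain $\tilde g(y)+\tilde g(y+L+H)\equiv K$, and finally matches constants (take $m=0$, $n\neq0$) to get $K=0$. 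Both arguments thus land at exactly the same spot: $\tilde g$ is \emph{anti}-$(L+H)$-periodic, hence $f$ is $2(L+H)$-periodic.

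Your hesitation about the upgrade from $2(L+H)$ to $L+H$ is entirely justified, and here you are in fact seeing further than the paper. The paper reads the identity $\tilde g(x+L+H)+\tilde g(x)=0$ as ``$(L+H)$-periodicity of $\tilde g$'', which it is not; and the lemma as stated is actually false. The function $f(x)=\sin(2\pi x/s)+C/(L+\ell)$ exhibited in the necessary part of Theorem~\ref{teo:nec_suf}, taken under (H1) together with (H2) with $m$ odd (so $L-\ell=ns$ and $2(L+H)=ms$), satisfies the integral hypothesis, yet $f(x+L+H)=-\sin(2\pi x/s)+C/(L+\ell)\neq f(x)$. What \emph{does} hold under (H1) alone is precisely what both arguments prove: $f-k$ is anti-$(L+H)$-periodic. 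Feeding that, together with $(L-\ell)$-periodicity, into a Fourier-coefficient check shows that under (H2) with $m$ even no nonzero mode survives, so $f\equiv k$; this rescues Lemma~\ref{le:6} and hence Theorem~\ref{teo:nec_suf}. There is no missing trick on your side; the obstacle you flag is a genuine error in the target statement, shared by the paper's own proof.
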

\begin{proof}
Since, by Lemma \ref{le:1}, $f$ is $(L-\ell)$-periodic, there exists a constant $k \in \RR$ such that 
$\left(\int_x^{x+L-\ell}f(s) ds\right)/(L-\ell) = k$, for any $x \in \RR$. 
Let us define the $(L-\ell)$-periodic function
\[
g(x) = f(x) - k, \qquad x \in \RR;
\]
By definition, $\int_x^{x+L-\ell}g = 0$ for any $x \in \RR$; furthermore
$g$ satisfies the assumption of conjecture \eqref{eq:PC}, indeed 
(since $f$ satisfies such assumption for some constant $C \in \RR$)
\[
\int_{\sigma(I)}g(s) ds = \int_{\sigma(I)}f(s) ds - k (L+\ell) = C - k (L+\ell).
\]
We term $C_g =C - k (L+\ell)$ and we plan to prove that $g$ (and, as a consequence, $f$) is $(L+H)$-periodic.

Let us consider the set 
\[
\Delta = \left\{ mL+n(L-\ell) : m,n \in \ZZ \right\};
\]
since, by assumption, $\frac{L}{L-\ell}$ is not rational,
the set $\Delta$ is dense in $\RR$. It turns out 
that for every $m,n \in \ZZ$, denoting $t = mL+n(L-\ell)$ the corrisponding element in $\Delta$,
there holds
\begin{equation}\label{eq:dadimostrare}
\int_{x}^{x+t} g(s) ds + \int_{x+L+H}^{x+L+H+t} g(s) ds = mC_g,\quad \text{for every } x \in \RR.
\end{equation}
Differentiating by $x$ the previous equation, we obtain that the continuous function 
\[
h_x(t) = g(x+t)-g(x)+g(x+L+H+t)-g(x+L+H), \quad t \in \RR,
\]
vanishes on the dense set $\Delta \subset \RR$, hence it vanishes on $\RR$ and
\[
g(x+L+H+t)-g(x+L+H) = -\left[ g(x+t)-g(x) \right], \quad \text{for every } t,x \in \RR.
\]
From the last equation, we deduce, for every $t,x \in \RR$, the following integral relation, 
\[
\begin{split}
\int_{x+L+H}^{x+L+H+t}[g(s)-g(x+L+H)]ds 
    &= \int_{x}^{x+t}[g(s'+L+H)-g(x+L+H)]ds' \\
    =& \int_{x}^{x+t}[g((s'-x)+x+L+H)-g(x+L+H)]ds' \\
    =& -\int_{x}^{x+t}[g(s')-g(x)]ds'
\end{split}
\]
that can be written as
\[
\int_{x+L+H}^{x+L+H+t}g(s)ds +\int_{x}^{x+t}g(s)ds = [g(x+L+H) +g(x)]t, \quad \text{for every } t,x \in \RR.
\]
We now deduce the $(L+H)$-periodicity of $g$ comparing the previous relation 
with equation \eqref{eq:dadimostrare}. 
Indeed, for every $x \in \RR$ and $t =  mL+n(L-\ell) \in \Delta$
there holds
\[
 [g(x+L+H) +g(x)]t = mC_g,
\]
we conclude choosing  $m=0$ and $n\neq 0$.\\
We are left to prove equation \eqref{eq:dadimostrare}. 
Equivalently, we are going to prove such relation by induction on $m \in \ZZ$.
When $m=0$ the equation follows from the fact that $\int_a^{a+L-\ell}g=0$, for every $a \in \RR$.
When $m>0$, we have the equalities
\[
\begin{split}
\int_{x+L+H}^{x+L+H+(m+1)L+n(L-\ell)} g(s) ds 
& = 
\int_{x+L+H}^{x+L+H+mL+n(L-\ell)} g(s) ds \\
& \qquad \qquad + \int_{x+L+H+mL+n(L-\ell)}^{x+L+H+(m+1)L+n(L-\ell)} g(s) ds \\
& = - \int_{x}^{x+mL+n(L-\ell)} g(s) ds \\
& \qquad \qquad + mC_g + \int_{x+L+H+mL+n(L-\ell)}^{x+L+H+(m+1)L+n(L-\ell)} g(s) ds .
\end{split}
\]
Thus we will be done if we can prove that
\[
\begin{split}
\int_{x+L+H+mL+n(L-\ell)}^{x+L+H+(m+1)L+n(L-\ell)} g(s) ds 
& = C_g + \int_{x}^{x+mL+n(L-\ell)} g(s) ds - \int_{x}^{x+(m+1)L+n(L-\ell)} g(s) ds \\
& = C_g - \int_{x+mL+n(L-\ell)}^{x+(m+1)L+n(L-\ell)} g(s) ds,
\end{split}
\]
that is equivalent to (replacing $a =x+mL+n(L-\ell)$)
\[
\int_{a+L+H}^{a+L+H+L} g(s) ds + \int_{a}^{a+L} g(s) ds = C_g.
\]
Since $g$ has vanishing integral on interval with lenght $L-\ell$, the first integral in
the previous equality is $\int_{a+L+H}^{a+L+H+\ell} g(s) ds$; 
the equality holds true by definition of the constant $C_g$.
To conclude our proof we still have to consider the case where $m<0$. 
Replacing $x'=x+mL+n(L-\ell)$ the left hand side of equation \eqref{eq:dadimostrare}
reads as
\[
- \int_{x'+L+H}^{x'+L+H-mL-n(L-\ell)}g(s) ds - \int_{x'}^{x'-mL-n(L-\ell)}g(s) ds
= -\left( -mC_g \right),
\]
using what we have already proved.
\end{proof}
\begin{lemma}\label{le:6}
If  (H1) and (H2) with $m$ even, then $f$ is $\ell$-periodic.
\end{lemma}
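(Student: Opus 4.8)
The plan is to prove that the auxiliary function
\[
\chi(x) := \int_x^{x+\ell} f(s)\,ds, \qquad x \in \RR,
\]
is constant; since $\chi'(x) = f(x+\ell) - f(x)$, this is exactly the assertion that $f$ is $\ell$-periodic. Two facts will be used repeatedly: by (H1) and Lemma~\ref{le:1}, $f$ is $(L-\ell)$-periodic, so that $M := \int_a^{a+(L-\ell)} f$ is independent of $a \in \RR$ (differentiate in $a$ and use periodicity); and $0 < \ell < L$, which follows from (H1) together with the standing assumption $L \ge \ell$.

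The first step is to read off, from the two integral identities $[\ell^+,H,L^+;x] = [L^+,H,\ell^+;x] = C$, a pair of affine relations for $\chi$. In $[\ell^+,H,L^+;x]$ I split the second integral as $\int_{x+\ell+H}^{x+\ell+H+L} f = \int_{x+\ell+H}^{x+2\ell+H} f + \int_{x+2\ell+H}^{x+\ell+H+L} f = \chi(x+\ell+H) + M$, the last equality because the trailing interval has length $L-\ell$. Likewise, in $[L^+,H,\ell^+;x]$ I split the first integral as $\int_x^{x+L} f = \chi(x) + M$. Substituting, one gets
\[
\chi(x) + \chi(x+\ell+H) \;=\; C - M \;=\; \chi(x) + \chi(x+L+H) \qquad (x \in \RR).
\]
From the equality of the two outer expressions, $\chi$ is $(L-\ell)$-periodic (as it must be, $f$ being so); and from $\chi(x+L+H) = (C-M) - \chi(x)$, applied twice in a row, $\chi$ is $2(L+H)$-periodic.

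Now comes the step that uses (H2) with $m$ even. Write $\frac{L-\ell}{2(L+H)} = \frac{n}{m}$ in lowest terms and set $u := \frac{2(L+H)}{m}$, so that $L-\ell = nu$ and $2(L+H) = mu$ with $\gcd(n,m)=1$. The subgroup of $(\RR,+)$ generated by the two periods $L-\ell$ and $2(L+H)$ of $\chi$ is then $\langle nu, mu\rangle = u\ZZ$, so $u$ is itself a period of $\chi$. Because $m$ is even, $L+H = \tfrac{m}{2}\,u$ is an integer multiple of $u$, hence also a period: $\chi(x+L+H) = \chi(x)$ for all $x$. Comparing this with $\chi(x+L+H) = (C-M) - \chi(x)$ yields $2\chi(x) = C - M$, i.e. $\chi \equiv \tfrac{C-M}{2}$ is constant, and differentiation gives the $\ell$-periodicity of $f$. (One could equally invoke Lemma~\ref{le:4}: its proof shows that $f$ minus its $(L-\ell)$-mean is $(L+H)$-antiperiodic, and the same group computation shows that under (H2) with $m$ even this function is also $(L+H)$-periodic, hence identically zero --- so $f$ is in fact constant, which is consistent, since an $\ell$-periodic and $(L-\ell)$-periodic continuous function with $\ell/(L-\ell) \notin \QQ$ is constant anyway.)

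The only place requiring any care is the bookkeeping in the first step: one has to track which interval-pieces have length $\ell$ and which have length $L-\ell$, and use the $(L-\ell)$-periodicity to collapse the latter to the constant $M$. The conceptual point --- and the only place (H2) and the parity of $m$ enter --- is the elementary group-theoretic remark that makes $L+H$, and not merely $2(L+H)$, a period of $\chi$; I do not anticipate a genuine obstacle beyond arranging the integrals correctly.
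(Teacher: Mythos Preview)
Your argument is correct, and it takes a genuinely different route from the paper's. The paper's proof invokes Lemma~\ref{le:4} (together with Lemma~\ref{le:1}) to assert that the mean-zero shift $g=f-k$ is $(L+H)$-periodic, and then reads off $C_g = 2\int_x^{x+\ell} g$ directly from the integral hypothesis and the $(L-\ell)$-periodicity. You bypass Lemma~\ref{le:4} entirely: from the two identities $[\ell^+,H,L^+;x]=C=[L^+,H,\ell^+;x]$ and the $(L-\ell)$-periodicity of $f$ you extract the affine relation $\chi(x)+\chi(x+L+H)=C-M$, and then use the B\'ezout-type observation that the periods $L-\ell=nu$ and $2(L+H)=mu$ of $\chi$ generate $u\ZZ$, so that $L+H=\tfrac{m}{2}u$ is itself a period precisely when $m$ is even.

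What your approach buys is a self-contained argument in which the role of the parity hypothesis is completely transparent; in the paper's presentation, the hypothesis ``$m$ even'' is not visibly used in the proof of Lemma~\ref{le:6}, its work being delegated to Lemma~\ref{le:4} (whose proof, as you observe in your parenthetical remark, actually yields the antiperiodicity $g(x+L+H)=-g(x)$, which the same gcd computation promotes to genuine $(L+H)$-periodicity exactly under (H2) with $m$ even). The trade-off is that once Lemma~\ref{le:4} is available, the paper's conclusion is immediate, whereas your route pays for its independence with the small group-theoretic step.
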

\begin{proof}
By Lemmata \ref {le:1} and \ref{le:4} $f$ is both $(L-\ell)$- and $(L+H)$-periodic. 
Let $g$ be the translation of $f$ introduced in Lemma \ref{le:4}.
We plan to prove that $g$ (and, as a consequence, $f$) is $\ell$-periodic.
Indeed, since $g$ is both $(L+H)$-periodic and $(L-\ell)$-periodic, we have for every $x \in \RR$
\begin{multline*}
C_g = \int_{x}^{x+L} g(s) ds + \int_{x+L+H}^{x+L+H+\ell} g(s) ds \\= 
      \int_{x}^{x+\ell} g(s) ds + \int_{x+\ell}^{x+L} g(s) ds + \int_{x}^{x+\ell} g(s) ds = 
      2 \int_{x}^{x+\ell} g(s) ds.
\end{multline*}
Hence $\int_{x}^{x+\ell} g(s) ds$ is constant and, by differentiation, we 
obtain that $g$ (hence $f$) is $\ell$-periodic.
\end{proof}
\begin{theorem}\label{teo:nec_suf}
Let $I=[a,b]\cup[c,d]$ for some $a<b<c<d$, and $\Sigma' = \Sigma$.
Then conjecture \eqref{eq:PC} holds for $f \in \cont(\RR)$ 
if and only if 
\begin{equation} \label{eq:condNS}
\text{(H1) $\wedge$  \Big( ($\lnot$H2) $\vee$ \big( (H2) with m even \big) \Big)}.
\end{equation}
\end{theorem}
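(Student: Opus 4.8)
\emph{Plan.} I would prove the two implications separately, using the normalisation $L\geq\ell$ (so that (H1) forces $L>\ell$) and the equivalence \eqref{eq:hpconj}; the sufficiency reduces almost entirely to the periodicity lemmas, while the necessity is settled by explicit sinusoidal counterexamples.

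\emph{Sufficiency.} Assume \eqref{eq:condNS} and let $f\in\cont(\RR)$ satisfy (one of the equivalent forms of) the integral condition. Since (H1) holds, Lemma~\ref{le:1} yields that $f$ is $(L-\ell)$-periodic. If in addition ($\lnot$H2) holds, Lemma~\ref{le:2} yields that $f$ is also $(L+\ell)$-periodic; if instead (H2) holds with $m$ even, Lemma~\ref{le:6} yields that $f$ is $\ell$-periodic. In either case $f$ is a continuous function admitting two positive periods $p,q$ with $p/q\notin\QQ$: indeed $\frac{L-\ell}{L+\ell}$ and $\frac{\ell}{L-\ell}$ are the images of $\frac{\ell}{L}\notin\QQ$ under Möbius maps with rational coefficients (whose inverses again have rational coefficients), hence they are irrational. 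Since the set of periods of a continuous function is a subgroup of $(\RR,+)$, this subgroup is dense, so $f$ is constant on a dense set and therefore constant. This proves the ``if'' direction.

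\emph{Necessity.} I argue by contraposition: if \eqref{eq:condNS} fails, then either $\lnot$(H1) holds, or (H2) holds with $m$ odd, and in each case I exhibit a non-constant $f\in\cont(\RR)$ with $\int_{\sigma(I)}f=0$ for every $\sigma\in\Sigma$. If $\lnot$(H1), write $\frac{\ell}{L}=\frac pq$ in lowest terms and set $T:=\frac{\ell}{p}=\frac{L}{q}$; then $f(x):=\sin\bigl(\frac{2\pi}{T}x\bigr)$ is non-constant and has vanishing integral over every interval of length an integer multiple of $T$, in particular over $[x,x+\ell]$ and over $[x,x+L]$, so $[\ell^+,H,L^+;x]=[L^+,H,\ell^+;x]=0$ for all $x$. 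If instead (H1) holds and (H2) holds with $m$ odd, write $\frac{L-\ell}{2(L+H)}=\frac nm$ in lowest terms (so $n\geq1$, $m$ odd), put $\omega:=\frac{n}{L-\ell}$ (well defined and non-zero, as $L>\ell$), and take $f(x):=\cos(2\pi\omega x)$.

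\emph{Verification of the second counterexample, and the main obstacle.} For this last construction I would run the computation with $e^{2\pi i\omega x}$ and take real parts at the end. Setting $u=e^{2\pi i\omega\ell}$, $v=e^{2\pi i\omega L}$, $w=e^{2\pi i\omega H}$, a direct integration gives
\[
[\ell^+,H,L^+;x]=\frac{e^{2\pi i\omega x}}{2\pi i\omega}\bigl[(u-1)+uw(v-1)\bigr],\qquad [L^+,H,\ell^+;x]=\frac{e^{2\pi i\omega x}}{2\pi i\omega}\bigl[(v-1)+vw(u-1)\bigr],
\]
so it suffices to make both bracketed factors vanish. By the choice of $\omega$ we have $\omega(L-\ell)=n\in\ZZ$, whence $u=v$; and $2\omega(L+H)=m$ is odd, whence $e^{2\pi i\omega(L+H)}=-1$, and since $\ell+H=(L+H)-(L-\ell)$ also $uw=e^{2\pi i\omega(\ell+H)}=e^{2\pi i\omega(L+H)}=-1$. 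Then $(u-1)+uw(v-1)=(u-1)(1+uw)=0$, and likewise for the other bracket, so $f$ satisfies the integral condition with $C=0$ while being non-constant because $\omega\neq0$. The only genuinely technical point of the whole argument is this verification (i.e. identifying the two scalar conditions $e^{2\pi i\omega(L-\ell)}=1$ and $e^{2\pi i\omega(L+H)}=-1$ and matching them with (H1) and (H2)-with-$m$-odd); the remainder is the bookkeeping of the logical cases and the elementary Möbius-invariance of irrationality used in the sufficiency.
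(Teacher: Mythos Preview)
Your proof is correct and follows essentially the same approach as the paper. The sufficiency argument is identical (Lemmas~\ref{le:1}, \ref{le:2}, \ref{le:6} plus the observation that the two resulting periods have irrational ratio under (H1)); for the necessity you construct the same sinusoidal counterexamples, with two minor cosmetic differences: you take $C=0$ (which suffices, since \eqref{eq:PC} only requires \emph{some} $C$), and you actually carry out the verification in the ``(H2) with $m$ odd'' case via the exponential computation, whereas the paper just asserts that $f(x)=\sin\bigl(\tfrac{2\pi}{s}x\bigr)+\tfrac{C}{L+\ell}$ works.
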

\begin{proof}[Proof of the necessary condition]
Assume \eqref{eq:condNS} fails. As is easy to check, this means that 
\begin{equation} \label{eq:condNSneg}
\text{$\lnot$(H1) $\vee$  \big( (H2) with m odd \big)}.
\end{equation}
In order to disprove \eqref{eq:PC}, let $C \in \RR$ be arbitrary.
Now, if $\lnot$(H1), then there exist $s \in \RR$ and $n_1, n_2 \in \NN$ such that 
\[
L = n_1 s\quad \text{and} \quad \ell = n_2 s;
\]
letting $f$ to be any $s$-periodic, non-constant continuous function such that 
\[
\int_{0}^s f(x) dx = \frac{C}{n_1+n_2},
\]
we see that
\[
\int_{\sigma(I)} f(x) dx = (n_1+n_2)\int_{0}^s f(x) dx  = C.
\]

If, on the contrary, $\frac{L-\ell}{2(L+H)} = \frac{n}{2h+1}$ for some $n,h \in \NN$,
then $L-\ell = ns$ and $2(L+H) = (2h+1)s$ for some $s>0$. 
In this case, 
\[
f(x) = \sin \left( \frac{2\pi}{s}x \right)+\frac{C}{L+\ell} 
\]
turns out to be the required non-constant function which contradicts \eqref{eq:PC}.
\end{proof}
\begin{proof}[Proof of the sufficient condition]
On the one hand, if (H1) and ($\lnot$H2) hold then, by Lemmata \ref{le:1} and \ref{le:2}, 
$f$ is both $(L-\ell)$- and $(L+\ell)$-periodic. Since, by assumption (H1), 
$\frac{L-\ell}{L+\ell} \notin \QQ$, $f$ is necessarily a constant function.
On the other hand, if (H1) and (H2) with $m$ even hold then $f$ turns out to be 
$(L-\ell)$-periodic, by Lemma \ref{le:1}, and $\ell$-periodic by Lemma \ref{le:6}.
Also in this case $f$ is necessarily constant, indeed  $\frac{L-\ell}{\ell} \notin \QQ$.
\end{proof}

\begin{remark}
Let us observe that conjecture \eqref{eq:PC} holds whenever the length of the  hole between the two intervals 
coincide with one of their lengths, whose ratio is irrational (i.e. $\ell/L \notin \QQ$ and $H \in \{\ell,L \}$).
\end{remark}

\section{One more interval, much more complexity}
\label{sec:2}

\begin{proposition}
Let $\Sigma'=\Sigma$, $f \in \cont(\RR)$ and let $\ell, L, H, \mathcal{L} > 0$ be such that
\[
\frac{\ell + L + \mathcal{L}}{H} \in \QQ.
\]
Then conjecture \eqref{eq:PC} does not hold for
the three-interval set 
\[
I = [0,\ell] \cup [\ell+H,\ell+H+L] \cup [\ell+L+2H,\ell+L+\mathcal{L}+2H].
\]
\end{proposition}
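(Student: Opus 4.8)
The plan is to disprove the implication \eqref{eq:PC} by producing, for an arbitrary $C\in\RR$, a non-constant continuous function $f$ with $\int_{\sigma(I)}f=C$ for every $\sigma\in\Sigma$. The rationality hypothesis $\frac{\ell+L+\mathcal L}{H}\in\QQ$ lets me fix a real $s>0$ and positive integers $p,q$ with $\ell+L+\mathcal L=ps$ and $H=qs$ (write $\frac{\ell+L+\mathcal L}{H}=\frac pq$ and take $s=H/q$). I would then set
\[
f(x)=\sin\!\Big(\frac{2\pi}{s}x\Big)+\frac{C}{\ell+L+\mathcal L},\qquad x\in\RR,
\]
which is continuous and non-constant. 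Since $\ell+L+\mathcal L$ is the Lebesgue measure of $I$, hence of every $\sigma(I)$, it is enough to check that $\int_{\sigma(I)}\sin\!\big(\frac{2\pi}{s}x\big)\,dx=0$ for all $\sigma\in\Sigma$.

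For a translation $\sigma\colon x\mapsto x+t$, I would write $\int_{\sigma(I)}\sin\!\big(\frac{2\pi}{s}x\big)\,dx$ as the sum of the three integrals over $[t,t+\ell]$, $[t+\ell+H,t+\ell+H+L]$ and $[t+\ell+L+2H,t+\ell+L+\mathcal L+2H]$. Because the integrand has period $s$ and because $H=qs$ and $2H=2qs$ are integer multiples of $s$, deleting the summands $H$ and $2H$ from the endpoints of the second and third integrals does not change their values; the three intervals then concatenate into $[t,t+\ell+L+\mathcal L]=[t,t+ps]$, and the integral of an $s$-periodic zero-mean function over an interval of length $ps$ vanishes. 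Thus $\int_{\sigma(I)}f=C$ for every translation.

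For a reflection $\sigma$ I would observe that $\sigma(I)$ is a translate of $-I$, and that $-I$, up to a further translation, equals $[0,\mathcal L]\cup[\mathcal L+H,\mathcal L+H+L]\cup[\mathcal L+L+2H,\mathcal L+L+\ell+2H]$ — a set of exactly the same shape as $I$ but with $\ell$ and $\mathcal L$ interchanged. Here it matters that the two holes of $I$ are both equal to $H$. Since $\mathcal L+L+\ell=ps$ is still a multiple of $s$, the translation computation above applies verbatim and again gives $\int_{\sigma(I)}\sin\!\big(\frac{2\pi}{s}x\big)\,dx=0$, hence $\int_{\sigma(I)}f=C$. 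This exhibits a non-constant $f$ satisfying the hypothesis of \eqref{eq:PC} for the constant $C$, contradicting its conclusion.

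I do not expect any serious obstacle: the whole argument rests on the single observation that, when the two holes coincide and $\frac{\ell+L+\mathcal L}{H}\in\QQ$, one can ``close the gaps'' modulo $s$ and view $\sigma(I)$ as covering an integer number of periods of the sinusoid; everything else is elementary bookkeeping with changes of variables. The only mildly delicate point is making sure the reflection case is not genuinely different, which is why I would spell out the shape of $-I$ explicitly and highlight the equal-holes feature.
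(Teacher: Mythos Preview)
Your proof is correct and follows essentially the same approach as the paper: pick a common period $s$ dividing both $\ell+L+\mathcal L$ and $H$, and exhibit a non-constant $s$-periodic function with the right average. The only cosmetic difference is that the paper writes $\int_{\sigma(I)}f$ as the integral over the convex hull of $\sigma(I)$ (length $\ell+L+\mathcal L+2H$) minus the integrals over the two holes (each of length $H$), which handles translations and reflections in a single line and avoids your separate treatment of $-I$.
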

\begin{proof}
Let $C>0$ be arbitrarily fixed. 
By assumption, there exist $s \in \RR$ and $n_1,n_2 \in \NN$ such that
\[
\ell + L + \mathcal{L} = n_1s \quad \text{and} \quad H = n_2s;
\]
we now choose an arbitrary (non-constant) $s$-periodic function $f$ such that 
\[
\int_0^s f(x)dx = \frac{C}{n_1}.
\]
For any $\sigma \in \Sigma$ the following identity holds
\[
\int_{\sigma(I)} f(x)dx = \int_{x_{\sigma}}^{x_{\sigma}+\ell+L+\mathcal{L}+2H} f(x) dx -
                          \int_{y_{\sigma}}^{y_{\sigma}+H} f(x) dx -
                          \int_{z_{\sigma}}^{z_{\sigma}+H} f(x) dx
\]
for suitable $x_\sigma, y_{\sigma}, z_{\sigma} \in \RR$, hence 
\[
\int_{\sigma(I)} f(x)dx = (n_1+2n_2)\frac{C}{n_1} - 2n_2\frac{C}{n_1} = C.
\]
\end{proof}
\begin{remark}
Let us observe that, given $\ell, L, H > 0$, there exist infinitely many $\mathcal{L}>0$
satisfying the assumption of the previous proposition. This fact implies, in particular, that
given \emph{any} two-interval set (for which the Pompeiu conjecture may fails or not)
we can add a third interval to obtain a set for which the conjecture \eqref{eq:PC} does not hold.
\end{remark}
We conclude with an example of a
three-interval set for which the Pompeiu conjecture holds; 
notice that the first two intervals of such a set constitute a domain which 
does not enjoy the Pompeiu property. 
\begin{example}\label{example}
Let us consider $\ell, L >0$ be such that $\frac{\ell}{L} \notin \QQ$, and the three-interval set
\[
I = [0,\ell] \cup [2\ell,3\ell] \cup [4\ell,4\ell+L].
\]
Let $f$ be a continuous function such that its integral on every $\sigma(I)$, $\sigma \in \Sigma$, 
is constantly equal to $C$, for some $C>0$.
For the sake of simplicity,
we extend in a natural way the notation for the two-interval set, introduced in the previous section,
to the three-interval case. 
Since $\Sigma' = \Sigma$ we obtain, for every $x \in \RR$,
\[
[\ell^+,\ell,\ell^+,\ell,L^+;x+L] = C 
\quad \text{and} \quad
[L^+,\ell,\ell^+,\ell,\ell^+;x] = C 
\] 
hence summing term by term (see Figure \ref{fig:ultima}, first and second lines)
\[
\int_x^{x+4\ell+2L}f(s)ds = 2C, \quad \forall x \in \RR,
\]
which implies that $f$ is $(4\ell+2L)$-periodic.\par
On the other hand since, for every $x \in \RR$,
\[
[\ell^-,\ell,\ell^-,\ell,L^-;x+\ell+L] = -C 
\quad \text{and} \quad
[L^+,\ell,\ell^+,\ell,\ell^+;x] = C 
\] 
we obtain $[L^+,5\ell,L^-;x] = 0$, for every $x \in \RR$ 
(see Figure \ref{fig:ultima}, third and second lines), which implies that the function
\[
\varphi(x) = \int_x^{x+L}f(s) ds, \qquad x \in \RR
\]
is not only $(4\ell+2L)$-periodic (as $f$ is), but also $(5\ell+L)$-periodic. 
Since $\frac{\ell}{L} \notin \QQ$, not even $\frac{4\ell+2L}{5\ell+L}$ does, $\varphi$ is constant and 
$f$ is $\ell$-periodic. 
Since $\frac{4\ell+2L}{\ell} \notin \QQ$, $f$ is constant.
\end{example}

\begin{figure}[ht]
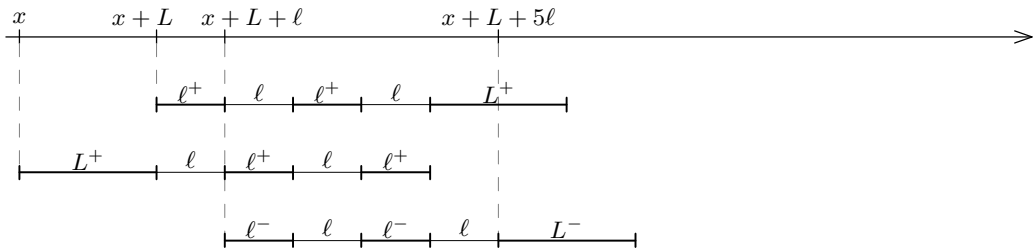

\begin{center}
\begin{texdraw}
\small
\drawdim cm  \setunitscale 0.9
\textref h:C v:C
\linewd 0.015
\arrowheadtype t:V
\arrowheadsize l:0.25 w:0.18
\move (-0.2 1) \avec(14.8 1)
\move (0 1.1) \lvec(0 0.9) \htext (0 1.3) {$x$}
\move (2 1.1) \lvec(2 0.9) \htext (1.8 1.3) {$x+L$}
\move (3 1.1) \lvec(3 0.9) \htext (3.4 1.3) {$x+L+\ell$}
\move (7 1.1) \lvec(7 0.9) \htext (7 1.3) {$x+L+5\ell$}
\linewd 0.01
\setgray 0.5 \lpatt (0.2 0.2)
\move (0 0.9) \lvec(0 -1)
\move (2 0.9) \lvec(2 0)
\move (3 0.9) \lvec(3 -2)
\move (7 0.9) \lvec(7 -2)
\lpatt()
\setgray 0
\linewd 0.01
\move (2 0) \lvec (8 0)
\linewd 0.03
\move (2 0) \lvec (3 0)
\move (4 0) \lvec (5 0)
\move (6 0) \lvec (8 0)
\move (2 -.1) \lvec (2 0.1)
\move (3 -.1) \lvec (3 0.1)
\move (4 -.1) \lvec (4 0.1)
\move (5 -.1) \lvec (5 0.1)
\move (6 -.1) \lvec (6 0.1)
\move (8 -.1) \lvec (8 0.1)
\htext (2.5 .2) {$\ell^+$}
\htext (3.5 .2) {$\ell$}
\htext (4.5 .2) {$\ell^+$}
\htext (5.5 .2) {$\ell$}
\htext (7 .2) {$L^+$}
\linewd 0.01
\move (0 -1) \lvec (6 -1)
\linewd 0.03
\move (0 -1) \lvec (2 -1)
\move (3 -1) \lvec (4 -1)
\move (5 -1) \lvec (6 -1)
\move (0 -1.1) \lvec (0 -0.9)
\move (2 -1.1) \lvec (2 -0.9)
\move (3 -1.1) \lvec (3 -0.9)
\move (4 -1.1) \lvec (4 -0.9)
\move (5 -1.1) \lvec (5 -0.9)
\move (6 -1.1) \lvec (6 -0.9)
\htext (1 -0.8) {$L^+$}
\htext (2.5 -0.8) {$\ell$}
\htext (3.5 -0.8) {$\ell^+$}
\htext (4.5 -0.8) {$\ell$}
\htext (5.5 -0.8) {$\ell^+$}
\linewd 0.01
\move (3 -2) \lvec (9 -2)
\linewd 0.03
\move (3 -2) \lvec (4 -2)
\move (5 -2) \lvec (6 -2)
\move (7 -2) \lvec (9 -2)
\move (3 -2.1) \lvec (3 -1.9)
\move (4 -2.1) \lvec (4 -1.9)
\move (5 -2.1) \lvec (5 -1.9)
\move (6 -2.1) \lvec (6 -1.9)
\move (7 -2.1) \lvec (7 -1.9)
\move (9 -2.1) \lvec (9 -1.9)
\htext (3.5 -1.8) {$\ell^-$}
\htext (4.5 -1.8) {$\ell$}
\htext (5.5 -1.8) {$\ell^-$}
\htext (6.5 -1.8) {$\ell$}
\htext (8 -1.8) {$L^-$}
\normalsize
\end{texdraw}
\end{center}
\caption{procedure to prove that function $f$ in Example \ref{example} is constant.\label{fig:ultima}}
\end{figure}


\begin{thebibliography}{1}

\bibitem{Brown1982}
L.~Brown and J.-P. Kahane.
\newblock A note on the {P}ompeiu problem for convex domains.
\newblock {\em Math. Ann.}, 259(1):107--110, 1982.

\bibitem{Brown1968}
L.~Brown, F.~Schnitzer, and A.~L. Shields.
\newblock A note on a problem of {D}. {P}ompeiu.
\newblock {\em Math. Z.}, 105:59--61, 1968.

\bibitem{Brown1973}
L.~Brown, B.~M. Schreiber, and B.~A. Taylor.
\newblock Spectral synthesis and the {P}ompeiu problem.
\newblock {\em Ann. Inst. Fourier (Grenoble)}, 23(3):125--154, 1973.

\bibitem{Pompeiu1}
D.~Pompeiu.
\newblock Sur certain systèmes d'équationslinéaires et sur une propriété
  intégrale de functions de plusieur variables.
\newblock {\em C. R. Acam. Sci. Paris}, 1088:1138--1139, 1929.

\bibitem{Pompeiu3}
D.~Pompeiu.
\newblock Sur une propriété de functions continue dépendent de plusieur
  variables.
\newblock {\em Bull. Sci. Math.}, 53(2):328--332, 1929.

\bibitem{Pompeiu2}
D.~Pompeiu.
\newblock Sur une propriété intégrale de functions de deux variables
  reélles.
\newblock {\em Bull. Sci. Acad. Royale Belgique}, 15(5):265--269, 1929.

\bibitem{Zalcman1992}
L.~Zalcman.
\newblock A bibliographic survey of the {P}ompeiu problem.
\newblock In {\em Approximation by solutions of partial differential equations
  ({H}anstholm, 1991)}, volume 365 of {\em NATO Adv. Sci. Inst. Ser. C Math.
  Phys. Sci.}, pages 185--194. Kluwer Acad. Publ., Dordrecht, 1992.

\end{thebibliography}

\end{document}